\documentclass[preprint]{imsart}
\RequirePackage[OT1]{fontenc}
\RequirePackage{amsthm,amsmath,amssymb,enumerate}
\RequirePackage[authoryear]{natbib}
\RequirePackage[colorlinks,citecolor=blue,urlcolor=blue]{hyperref}
\usepackage{graphicx}
\usepackage{tikz}

\raggedbottom
\parskip=1.5mm
\parindent0cm

\def\E{{\mathbb E}}
\def\P{{\mathbb P}}
\def\R{{\mathbb R}}
\def\Z{{\mathbb Z}}
\def\V{{\mathbb V}}

\def\N{{\mathbb N}}

\def\V{{\mathcal V}}

\def\Vtoi{{\V_{ i }}}
\def\PF{{\mathcal P}_f}


\newtheorem{theo}{Theorem}
\newtheorem{prop}{\indent Proposition}
\newtheorem{lem}{\indent Lemma}
\newtheorem{defin}{\indent Definition}

\newtheorem{rem}{\indent Remark}
\newtheorem{ass}{Assumption}
\newtheorem{cor}{\indent Corollary}

\begin{document}
\begin{frontmatter}
\title{Phase transition for infinite systems of spiking neurons.}
\runtitle{Phase transition for systems of interacting neurons.}

\author{\fnms{P.A.} \snm{Ferrari}\thanks{Universidad de Buenos Aires, pferrari@dm.uba.ar}
},
\author{\fnms{A.} \snm{Galves}\thanks{Universidade de S\~ao Paulo, galves@usp.br}},
\author{\fnms{I.} \snm{Grigorescu}\thanks{University of Miami, igrigore@math.miami.edu}
},
\author{\fnms{E.} \snm{L\"ocherbach}\thanks{Universit\'e Paris Seine, eva.loecherbach@u-cergy.fr}}

\thankstext{t1}{June 28, 2018}
\runauthor{P. Ferrari et al.}

\begin{abstract}
We prove the existence of a phase transition for a stochastic model of interacting neurons. The spiking activity of each neuron is represented by a point process having rate $1 $ whenever its membrane potential is larger than a threshold value. This membrane potential evolves in time and integrates the spikes of all {\it presynaptic neurons} since the last spiking time of the neuron.  When a neuron spikes, its membrane potential is reset to $0$ and simultaneously, a constant value is added to the membrane potentials of its postsynaptic neurons. Moreover, each neuron is exposed to a leakage effect leading to an abrupt loss of potential occurring at random times driven by an independent Poisson point process of rate $\gamma > 0 .$
For this process we prove the existence of a value $\gamma_c$ such that the system has one or two extremal invariant measures according to whether $\gamma  > \gamma_c $ or not.
\end{abstract}

\begin{keyword}[class=MSC]
\kwd[]{60G55}
\kwd{60K35}
\kwd{92B99}
\end{keyword}

\begin{keyword}
\kwd{systems of spiking neurons}
\kwd{interacting point processes with memory of variable length}
\kwd{additivity and duality}
\kwd{phase transition}
\end{keyword}

\end{frontmatter}

\section{Introduction}
In the present article we study an infinite system of interacting point processes  with memory of variable length modeling spiking neuronal networks. Our goal is to prove the existence of a phase transition for this model. 

Let us informally present the class of interacting point processes we consider. The spiking activity of each neuron in the system is represented by a point process in continuous time. Its rate changes in time and depends on the membrane potential of the neuron. This membrane potential integrates the spikes of all its {\it presynaptic neurons} since the last spiking time of the neuron.  At its spiking time, the membrane potential of the spiking neuron is reset to  $0$. Simultaneously, the membrane potentials of its {\it postsynaptic neurons} receive an additional fixed value. Moreover, each neuron is exposed to a leakage effect leading to an abrupt loss of potential occurring at random times driven by an independent Poisson point process of constant rate $\gamma .$ 

The fact  that the membrane potential is reset to $0$ at each spiking time of the neuron implies that the model can be seen as a system of interacting point processes where each of the point processes has a  memory of variable length. This is a non-trivial extension of the notion of {\it stochastic chain with memory of variable length} introduced by \cite{rissanen}. The biologically motivated fact that the memory of each neuron has variable length introduces an interesting mathematical challenge. Indeed, the jump rate of each neuron, as a function of the past, is not continuous, contrarily to what happens usually with interacting point processes having memory of infinite length, see for instance \cite{BremaudMassoulie94}. 

Our system is an extension to continuous time of the model of interacting neurons introduced in \cite{GL13}. 
Since then, several variants of this model have been discussed in the literature, see for instance \cite{AAEE15}, \cite{Touboul-Robert}, \cite{fournier2016}, \cite{Duarte_Ost:15}, \cite{Yaginuma2016}, \cite{Brochinietal:16} for a non exhaustive list of references. We refer to \cite{gl} for a review.

In the present paper we prove phase transition for a specific instantiation of this model. We consider a continuous time version of the model in which the set of all neurons is represented by the $1$-dimensional lattice and each neuron has its neighbors as post-synaptic neurons. 
In this version the membrane potential of each neuron takes only positive integer values and the associated spiking rate is given by $1$ if and only if the potential is strictly positive. Therefore, neurons in our system are either {\it quiescent}, when they have
membrane potential $0,$ or {\it active}, when they have strictly positive membrane potential. The distinction between {\it active} and {\it quiescent} states is reminiscent of \cite{WilsonCowan72},
  see also \cite{Kilpatrick2015} and \cite{Cowantalk}.

For this model, phase transition means that there exists a critical value $\gamma_c\in\, ] 0 , +\infty [ $ for the leakage parameter $\gamma$, such that for $\gamma  > \gamma_c,$ each neuron spikes a finite number of times and for $\gamma <  \gamma_{c},$ each neuron spikes infinitely many times. This is the content of our main result, Theorem \ref{theo:0}. 
To the best of our knowledge, up to now, this is the first rigorous proof of the existence of a phase transition for this model. 

To prove this we consider another system with interacting components, namely the system of spiking rates of the neurons. Let us denote $\eta_i(t)$ the spiking rate of neuron $i$ at time $t$. Then the resulting time evolution of the spiking rates of the neurons is an \emph{interacting particle system}, a Markov process $\eta(t)\in \{0,1\}^\Z$ with local interactions (\cite{Liggett1985}).  When $\eta_i ( t) $ equals one, at rate $\gamma$ it goes to 0 (this is the leakage part of the evolution). Moreover, when $ \eta_i  (t) = 1 , $ then at rate 1, neuron $i$ \emph{spikes}. When neuron $i$ spikes, its rate goes to 0 and at the same time the spiking rates of neurons $i-1$ and $i+1$ both go to 1, regardless of their values before the spiking. A quiescent neuron can become active only when a nearest neighbor neuron spikes. We show that this system has a dual in the sense of  \cite{harris1976}, see also \cite{association}. With these tools and a contour argument for the dual process, reminiscent to the one given by \cite{griffeath} for the contact process, we prove that the process $\eta_t$ has a phase transition in the following sense. There exists a critical value $\gamma_c\in\, ] 0 , +\infty [ $  such that for $\gamma  > \gamma_c,$ the system has only one trivial invariant measure in which all neurons are quiescent and no spiking activity is present, and for $\gamma < \gamma_c$, a second nontrivial  invariant measure exists according to which all neurons spike infinitely often. This implies our main result,  Theorem~\ref{theo:0}.

This paper is organized as follows. In Section \ref{sec:def}, we present the general model and state Theorem \ref{theo:0}, which is our main theorem. The associated interacting particle system and the notion of duality are discussed in Section \ref{sec:dual}, where we also prove Theorem \ref{theo:deux} on the extinction probability of the dual process. The proof of Theorem \ref{theo:0} is given in Section \ref{sec:proof}. Finally, in Section \ref{sec:4}, we discuss the particular case $\gamma = 0 $ in which an ad hoc proof of the existence of two extremal invariant measures is given.

\section{Definitions and main result}\label{sec:def}
We consider an infinite system of interacting point processes with memory of variable length. This system is described as follows. First of all, let $ I$ be a countable set. To each $i \in I, $ we attach two point processes $( N^{ \dag}_i (t), t \geq 0 )   $ and 
$ ( N^{ *}_i (t)  ,  t \geq 0  ) , $ defined on a suitable probability space $ ( \Omega , { \mathcal A} , P ) ,$ with their associated filtration $({ \mathcal F}_t)_{t \geq 0} $ where $ { \mathcal F}_t = \sigma \{ N^{ \dag}_i  ( s) , N^{ *}_i  ( s ) , 
 s \le t , i \in I \} .$ 

The first family 
$( N^{ \dag}_i ,  i \in I ) $ is composed by  i.i.d.\  Poisson processes of rate $\gamma \geq 0 .$ 

The second family $( N^{ *}_i ,  i \in I ) $ is characterized by the property that for all  $ s \le t, $  
\begin{equation}\label{eq:def1}
E  ( N^*_i ( t) - N^*_i (s)    \, | \, { \mathcal F}_s ) = \int_s^t E (\varphi_i (X_i (u) )  | \, { \mathcal F}_s ) du  ,
\end{equation}
with 
\begin{equation}\label{eq:def2}
X_i ( t) = \sum_{ j \in \Vtoi   } \int_{ ] L_i (t) , t [ } d N^*_j (s) ,
\end{equation}
for all $ t \geq 0.$ In the above formula, $\Vtoi \subset I$ is a subset of $I$ and
$$ L_i (t)  = \sup \{ s \le t : \Delta N^*_i ( s ) +  \Delta N^{\dag}_i ( s ) = 1 \} ,$$
where $ \Delta  N^*_i ( s )  =  N^*_i ( s ) -  N^*_i ( s - )$ and  $ \Delta  N^\dag_i ( s )  =  N^\dag_i ( s ) -  N^\dag_i ( s - ).$
Moreover, $ \varphi_i , i \in I, $ is a family of  rate functions. 

The purpose of our paper is to model an interacting system of spiking neurons. In neurobiological terms, $I $ is the set of neurons, and for each $i \in I, $ $X_i (t) $ represents the {\it membrane potential} of neuron $i$ at time $t $ and  $\Vtoi$ is the set of {\it presynaptic neurons} of $i. $ We interpret the atoms of $N^*_i $ as {\it spiking times} of neuron $i.$ The effect of these spikes is propagated through the system via {\it chemical synapses}, increasing the potential of neurons $j$ belonging to the set of {\it postsynaptic neurons}  $ \{ j \in I : i \in \V_{ j }  \}  .$
According to  \eqref{eq:def2}, the membrane potential $X_i (t)$ of neuron $i$ is reset to $0,$ interpreted as resting potential, after each spike of neuron $i.$ 
The atoms of $N^\dag_i $ are interpreted as {\it total leak times,} inducing an instantaneous total loss of membrane potential of neuron $i .$

We write $ X(t) := (X_i ( t ), i \in \Z ) .$  Observe that by construction the membrane potentials $X_i ( t) $ take values in the set of positive integers.

It is well known that the way the neurons are organized in the cortex is very complex, and a full understanding of the underlying structure is  still controversial within the neuroscience community. These questions are out of the scope of the present article, and we refer the reader to \cite{spornsreview} for a recent review of the issue. In what follows, we will assume that neurons are organized in the one dimensional lattice, that is,  $I = \Z . $ Moreover, we impose 

\begin{ass}\label{ass:1}

i) for all $i, $ $\Vtoi = \{ i-1, i + 1 \} .$ 

ii) for all $i, $ for all $ x \in \R ,$ $ \varphi_i  ( x) = \varphi ( x) = 1_{\{ x > 0 \} } .$ 
\end{ass}

\begin{rem}
Point i) of the above assumption implying nearest-neighbor interactions is certainly a simplification that does not apply to the brain's structure in general which is very complicated. But in simpler nervous tissues like the retina  neurons are arranged in layers inducing an interaction structure of this type (see, for instance, \cite{BS1998}). 
\end{rem}

Observe that the above choice of rate functions $\varphi_i$ implies that a neuron can be either {\it quiescent}, when it has membrane potential $0,$ or {\it active}, when it has strictly positive membrane potential. These two states are reminiscent of the ones appearing in the classical Wilson-Cowan model, cf.\ \cite{WilsonCowan72} and \cite{WilsonCowan73}. 

An important feature of this process is the existence of a phase transition, that is, the existence of a critical value $ \gamma_c$ with $ 0 < \gamma_c < \infty $ such that for all $\gamma >  \gamma_c, $ any fixed neuron will stop sending spikes after some finite time almost surely, while for $ \gamma < \gamma_c, $ for any initial configuration such that infinitely many neurons have membrane potential $X_i (0) \geq 1  , $ the system keeps sending spikes forever. More precisely, the following theorem holds.

\begin{theo}\label{theo:0} Under Assumption \ref{ass:1} and assuming that $ X_i (0) \geq 1$ for all $i \in \Z $,  there exists  $ \gamma_c$ with $ 0 < \gamma_c < \infty $ such that the following holds. For all $ i \in \Z,$
$$ \P (  N^*_i ( [ 0, \infty)) < \infty   ) = 1, \mbox{ if   $ \gamma > \gamma_c $} $$
and 
$$ \P (   N^*_i ( [ 0, \infty)) =  \infty  ) > 0  ,  \mbox{ if   $ \gamma < \gamma_c .$}$$
\end{theo}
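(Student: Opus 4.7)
The proof passes through the spiking-rate process $\eta$ on $\{0,1\}^{\Z}$ defined by $\eta_i(t) := \mathbf{1}\{X_i(t) \geq 1\}$. Under Assumption~\ref{ass:1}(ii) the rate $\varphi(X_i(t)) = \mathbf{1}\{X_i(t) > 0\}$ depends on $X$ only through $\eta$, so $\eta$ is itself Markovian with the dynamics described in the introduction: whenever $\eta_i(t) = 1$, the site is flipped to $0$ at rate $\gamma$ (leakage), and independently at rate $1$ neuron $i$ spikes, forcing $\eta_i$ to $0$ and $\eta_{i\pm 1}$ to $1$. Since atoms of $N^*_i$ can only occur on $\{\eta_i = 1\}$, at conditional rate $1$, the event $\{N^*_i([0,\infty)) = \infty\}$ coincides almost surely with $\{\int_0^\infty \eta_i(t)\,dt = \infty\}$, and the hypothesis $X_i(0) \geq 1$ for all $i$ translates to $\eta(0) = \mathbf{1}$.

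A graphical construction using, independently at each site, a rate-$\gamma$ Poisson process of leak marks and a rate-$1$ Poisson process of spike marks produces a coupling of all initial conditions and all $\gamma \geq 0$ simultaneously. This coupling is monotone in the initial configuration and monotone decreasing in $\gamma$, so
$$ \theta(\gamma) := \P_\gamma\bigl(\eta_0(t) = 1 \text{ for arbitrarily large } t \bigm| \eta(0) = \mathbf{1}\bigr) $$
is nonincreasing in $\gamma$, and $\gamma_c := \sup\{\gamma \geq 0 : \theta(\gamma) > 0\}$ is well-defined in $[0, \infty]$. The remaining task is to show $0 < \gamma_c < \infty$ and to upgrade $\theta(\gamma) = 0$ to almost sure finiteness of $N^*_i([0,\infty))$.

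For $\gamma_c > 0$ I would invoke the additive duality constructed in Section~\ref{sec:dual} together with Theorem~\ref{theo:deux}: persistent activity at site $i$ in the forward process is equivalent to survival of the dual process started from $\{i\}$, and Theorem~\ref{theo:deux} furnishes positive survival probability for all sufficiently small $\gamma$. For $\gamma_c < \infty$ I would argue via a Peierls-type contour estimate on the dual, in the spirit of the contact-process argument of \cite{griffeath}: a dual trajectory surviving to time $t$ necessarily exposes a connected space-time region whose boundary must be covered by a linear-in-$t$ number of leak marks, and a sum over admissible contours shows that for $\gamma$ large enough the expected number of such trajectories decays exponentially in $t$. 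Borel--Cantelli then yields almost sure extinction of the dual, hence $\eta_i(t) \to 0$ almost surely by duality; since $N^*_i$ only ticks at rate $1$ on $\{\eta_i = 1\}$, this gives $N^*_i([0,\infty)) < \infty$ almost surely.

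The main obstacle I anticipate is the contour bookkeeping in the Peierls step. The dual dynamics mixes a pure-death transition from leak marks with a coalescence-plus-branching transition from the reset of a spiking neuron, so its space-time contours are less rigid than in the contact process; producing a workable contour bound together with an explicit upper bound for $\gamma_c$ will require a careful enumeration of contour shapes and a matching probabilistic weight for each mark type.
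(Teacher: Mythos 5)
Your overall architecture---pass to the spiking-rate process $\eta$, use the additive duality with the set-valued dual $C^i$, and reduce the theorem to the survival/extinction dichotomy of the dual---is exactly the paper's. But your argument for $\gamma_c<\infty$ is misdirected. The Peierls/contour bound in the spirit of \cite{griffeath} controls the probability that the dual \emph{dies out}: a finite space-time cluster is enclosed by a closed contour, and it is the death steps along that contour that each cost a factor of order $\gamma$, so that summing over contours shows survival with positive probability when $\gamma$ is \emph{small}. That is precisely how the paper proves $\gamma_c>0$ inside Theorem~\ref{theo:deux}. A dual trajectory that survives to time $t$ has no closed contour, and its boundary certainly need not be ``covered by a linear-in-$t$ number of leak marks''---survival is favored by the \emph{absence} of leak marks---so the estimate you describe does not exist. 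For the large-$\gamma$ direction the paper does something much simpler: it dominates $|C^i(t)|$ by a continuous-time branching process with per-particle birth rate $1$ and death rate $\gamma$, which is subcritical for $\gamma$ large enough, giving almost sure extinction. Alternatively, you may simply cite Theorem~\ref{theo:deux}, which you already invoke for the other direction and which states both halves of the dichotomy; monotonicity in $\gamma$ (which the paper proves for the dual, not the forward process) then produces a single critical value.

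The second issue is that ``persistent activity at site $i$ is equivalent to survival of the dual started from $\{i\}$'' is asserted, not proved, and it is essentially the entire content of the paper's proof of Theorem~\ref{theo:0}. Duality is a fixed-time identity, $P(\eta^{\mathbf{1}}_0(t)=1)=P(\tau^0>t)$, which only yields $\lim_{t}P(\eta^{\mathbf{1}}_0(t)=1)=P(\tau^0=\infty)>0$. Upgrading this to positive probability of $\{\eta_0(t)=1 \text{ for arbitrarily large } t\}$, and then to $P(N^*_0([0,\infty))=\infty)>0$, requires an argument: the paper uses the occupation functional $A_t=\int_0^tP(\eta^{\mathbf{1}}_0(s)=1)\,ds\to\infty$ together with the strong Markov property at $\sigma_s=\inf\{u\ge s:\eta_0(u)=1\}$ to get $P(\sigma_s<\infty)=1$ for every $s$, hence recurrence of the state $1$ with probability one. (A reverse-Fatou bound $P(\limsup_n\{\eta_0(t_n)=1\})\ge\limsup_nP(\eta_0(t_n)=1)$ would suffice for the weaker positive-probability claim of the theorem, but you must still pass from ``$\eta_0=1$ infinitely often'' to ``infinitely many atoms of $N^*_0$'', e.g.\ by noting that each excursion of $\eta_0$ at $1$ lasts at least an independent exponential time, so the occupation time at $1$ is infinite and the conditionally rate-$1$ point process $N^*_0$ accumulates infinitely many atoms. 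Your observation that $\{N^*_i([0,\infty))=\infty\}$ coincides a.s.\ with $\{\int_0^\infty\eta_i\,dt=\infty\}$ is the right intermediate step; it just needs to be connected to the fixed-time duality by one of these arguments rather than declared equivalent to dual survival.)
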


To prove this result we introduce an auxiliary process which is a Markovian interacting particle system and which has the important property of possessing a dual. This is done in the next section.
\section{An auxiliary interacting particle system and its dual}\label{sec:dual} 
For any $ i \in \Z$ and $t \geq 0, $ let 
\begin{equation}\label{eq:etat}
 \eta_i (t) = 1_{\{ X_i (t) > 0 \} }
\end{equation}
and define $\eta (t) = (\eta_i (t), i \in \Z ) $
which is a process taking values in 
 $\{ 0,1\}^\Z  $.  

\begin{lem} 
The process $(\eta (t) , t \geq 0 ) $ is a continuous time Markov process on $ \{ 0, 1\}^\Z, $ with generator
\begin{equation}\label{eq:additive} 
L f (\xi) = \gamma \sum_{ i \in \Z}[  f ( \pi_i^\dag (\xi) ) - f (\xi ) ] + \sum_{ i \in \Z } \xi_i [ f ( \pi_i (\xi) ) - f(\xi ) ] ,
\end{equation}
 for any cylinder function $f:\{0,1\}^\Z\to \R.$ Here,  $ \pi_i^\dag ,  \pi_i : \{0, 1 \}^\Z \to \{0, 1 \}^\Z $ are maps defined as follows. 
\begin{eqnarray*}
 (\pi_i^\dag (\xi ) )_j &= &\xi_j , \mbox{ for all $ j \neq i ,$ }\\
(\pi_i^\dag (\xi ) )_i &= &0,
\end{eqnarray*}
and  
\begin{eqnarray*}
 (\pi_i (\xi ) )_j &=& \xi_j , \mbox{ for all $ j \neq i , i \pm 1,$ }\\
(\pi_i (\xi ) )_{i+1} &=& (\pi_i (\xi ) )_{i-1} = 1, \; (\pi_i (\xi ) )_{i} = 0 .
\end{eqnarray*}
\end{lem}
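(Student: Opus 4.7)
The plan is to simultaneously prove the Markov property of $\eta$ and identify its generator by showing that $\eta$ solves the martingale problem for $L$ and then invoking uniqueness for that problem on $\{0,1\}^{\mathbb{Z}}$.

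First I would read off the effect of each event on $\eta$. The process $\eta(t)$ is piecewise constant and only changes at atoms of one of the point processes $N^*_i$, $N^\dag_i$. At an atom $s$ of $N^*_i$, the definition of $N^*_i$ forces $X_i(s-) > 0$, i.e.\ $\eta_i(s-) = 1$; then \eqref{eq:def2} gives $X_i(s) = 0$ and $X_{i\pm 1}(s) = X_{i\pm 1}(s-) + 1 \geq 1$, so $\eta(s) = \pi_i(\eta(s-))$. At an atom $s$ of $N^\dag_i$ one has $X_i(s) = 0$ and the other coordinates of $X$ are unaffected, hence $\eta(s) = \pi_i^\dag(\eta(s-))$; note this equals $\eta(s-)$ when $\eta_i(s-) = 0$, consistently with the leak having no visible effect on $\eta$ in that case.

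Second, by \eqref{eq:def1} and Assumption \ref{ass:1}(ii), $N^*_i$ has $({\mathcal F}_t)$-stochastic intensity $\varphi(X_i(t)) = \eta_i(t)$, while $N^\dag_i$ has constant intensity $\gamma$. Combining this with the description of jumps above, the compensation formula for point processes applied to any cylinder function $f:\{0,1\}^{\mathbb{Z}} \to \mathbb{R}$ yields that
\begin{equation*}
M^f(t) := f(\eta(t)) - f(\eta(0)) - \int_0^t L f(\eta(s))\,ds
\end{equation*}
is an $({\mathcal F}_t)$-martingale, with $L$ exactly the operator in \eqref{eq:additive}. Because $f$ is cylindrical, only finitely many sites contribute to the sum and all integrals are well defined; the crucial point is that $Lf(\eta(s))$ depends on $\eta(s)$ alone, not on $X(s)$, which is why the random time change collapses to a generator on $\{0,1\}^{\mathbb{Z}}$.

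Finally, to conclude that $\eta$ is itself Markov with generator $L$, I would invoke well-posedness of the $L$-martingale problem on $\{0,1\}^{\mathbb{Z}}$. Since $L$ has uniformly bounded local rates ($\gamma + 1$ per site) and strictly finite range (nearest-neighbor interactions), the standard theory of interacting particle systems (e.g.\ \cite{Liggett1985}, Chapter~I) produces a unique Markov family on $\{0,1\}^{\mathbb{Z}}$ whose coordinate process solves this martingale problem. Since $\eta$ is a solution with respect to the possibly larger filtration $({\mathcal F}_t)$, it is also a solution on the canonical space, and uniqueness identifies its law with that of the Markov process generated by $L$. The only non-mechanical step is the verification that Liggett's hypotheses apply, which here is immediate from the boundedness and finite range of the rates; this is what I would expect to be the main technical obstacle, although it is standard.
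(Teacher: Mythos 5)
Your proposal is correct and complete; note that the paper itself states this lemma without any proof, treating it as immediate from the construction, so your write-up actually supplies the missing argument rather than paralleling one. The two key observations you make are exactly the right ones: the jump rates of $N^*_i$ and $N^\dag_i$ and the effect of each atom on $\eta$ depend on $X$ only through $\eta = (1_{\{X_i>0\}})_i$, so $\eta$ is autonomous and solves the martingale problem for $L$; and the bounded, finite-range rates put $L$ squarely within Liggett's Chapter I framework, whose well-posedness of the martingale problem identifies the law of $\eta$ and yields the Markov property. The only detail worth making explicit is that, almost surely, no two of the driving point processes have a common atom (immediate for the independent Poisson leak processes, and true for the $N^*_i$ because their compensators are absolutely continuous), so the jump-by-jump bookkeeping behind the compensation formula is legitimate.
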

In the sequel, we denote $ (\eta^\xi(t), t \geq 0)  $ the process with generator \eqref{eq:additive}, starting from $\eta^\xi ( 0 ) = \xi$. 

In what follows we make use of the notions of additivity and duality.  To make this text self-contained, we briefly recall these notions from \cite{harris1976} and \cite{association}.

\begin{defin}
A map $ \pi : \{0, 1 \}^{\Z} \to \{0, 1 \}^{\Z} $ is called additive, if for all $ j \in \Z$ and for all $\xi \in \{0, 1 \}^ \Z,$ 
$$( \pi  (\xi))_j  = \sup \{  (\pi ( \delta(k)))_j : \xi_k = 1 \} .$$
\end{defin} 

We denote $ \PF ( \Z ) $ the set of finite subsets of $\Z .$ For any $ F \in \PF (\Z) , $ we define the function 
$ \theta_F : \{ 0, 1\}^\Z \to \{0, 1 \} $ as follows. For any $ \xi \in  \{0, 1 \}^{\Z} , $
$$    \theta_F (\xi) = \max \{ \xi_i : i \in F \} .$$

\begin{defin}
Let  $ \pi : \{0, 1 \}^{\Z} \to \{0, 1 \}^{\Z} $ and $ \sigma : \PF (\Z) \to \PF (\Z) . $ They are called dual if for any $\xi \in \{0, 1 \}^ \Z$ and any $F \in \PF (\Z)$ the following equality holds. 
\begin{equation}\label{eq:duall}
\theta_F ( \pi ( \xi ))  = \theta_{\sigma  (F)} (\xi ) . 
\end{equation}
\end{defin}

\begin{lem}\label{lem:duall} 
If $ \pi : \{0, 1 \}^{\Z} \to \{0, 1 \}^{\Z} $ is additive, then there exists a unique transformation $\pi^* $ on $ \PF ( \Z ) $ such that \eqref{eq:duall} is satisfied. This transformation is given by
$$ \pi^*  (\{ k \}) = \{ j \in \Z : ( \pi( \delta ( j))_k = 1 \}  $$
for all $k \in \Z $ and 
$$  \pi^* (F) = \bigcup_{k \in F}  \pi^* (\{ k \})  ,$$
for all $F \in \PF (\Z ) . $ 
\end{lem}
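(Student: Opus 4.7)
The plan is to exploit the duality relation by testing it against the extremal configurations $\xi = \delta(j)$, which through the single-site indicators will pin down what $\pi^*$ must do.

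For uniqueness, suppose $\pi^*$ is any map satisfying \eqref{eq:duall}. Fix $F \in \PF(\Z)$ and, for each $j \in \Z$, plug $\xi = \delta(j)$ into \eqref{eq:duall}. The right-hand side becomes $\theta_{\pi^*(F)}(\delta(j)) = \mathbf{1}_{\{j \in \pi^*(F)\}}$, while the left-hand side is $\theta_F(\pi(\delta(j))) = \max_{k \in F}(\pi(\delta(j)))_k$. Hence $\pi^*(F)$ is forced to equal $\{j \in \Z : (\pi(\delta(j)))_k = 1 \text{ for some } k \in F\}$, which in particular gives the formula $\pi^*(\{k\}) = \{j \in \Z : (\pi(\delta(j)))_k = 1\}$ and then $\pi^*(F) = \bigcup_{k \in F}\pi^*(\{k\})$ by reading off the union structure.

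For existence, I take these identities as the definition of $\pi^*$ and verify \eqref{eq:duall}. Let $\xi \in \{0,1\}^\Z$ and $F \in \PF(\Z)$. By additivity, for each $k \in F$,
\begin{equation*}
(\pi(\xi))_k = \sup\{(\pi(\delta(j)))_k : \xi_j = 1\},
\end{equation*}
so that
\begin{equation*}
\theta_F(\pi(\xi)) = \max_{k \in F}(\pi(\xi))_k = \sup\{(\pi(\delta(j)))_k : k \in F,\ \xi_j = 1\}.
\end{equation*}
This supremum equals $1$ iff there exists $j$ with $\xi_j = 1$ and some $k \in F$ with $(\pi(\delta(j)))_k = 1$, i.e.\ iff there exists $j \in \pi^*(F)$ with $\xi_j = 1$, which is precisely $\theta_{\pi^*(F)}(\xi)$.

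The only subtlety I would flag is the implicit requirement that $\pi^*(F)$ lie in $\PF(\Z)$, i.e.\ be finite. This is not automatic from additivity alone, but holds in the applications of interest here because the maps $\pi_i^\dag$ and $\pi_i$ of \eqref{eq:additive} affect only finitely many coordinates, so for fixed $k$ only finitely many single-site inputs $\delta(j)$ can produce a $1$ at site $k$. Beyond this finiteness point, the argument is a direct unwinding of the definitions, so I do not expect any real obstacle.
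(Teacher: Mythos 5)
Your argument is correct and is exactly the standard proof that the paper omits (it cites \cite{harris1976} and \cite{association} rather than proving the lemma): testing \eqref{eq:duall} against $\xi=\delta(j)$ forces the formula for $\pi^*$, and additivity gives the converse verification. You are also right to flag that finiteness of $\pi^*(F)$ is not a consequence of additivity alone but does hold for the local maps $\pi_i^\dag$ and $\pi_i$ used here, which is the only point the lemma's statement glosses over.
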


The above result is classical and very easy to prove, we refer the reader to \cite{harris1976} and \cite{association} for details. 

Let us now come back to the maps $\pi_i^\dag $ and $\pi_i $ that define the generator \eqref{eq:additive}. It is immediate to see that the following result holds true. 

\begin{lem}
For all $i \in Z, $ $\pi_i^\dag $ and $\pi_i $ are additive and their associated dual maps are given by 
$$
  (\pi_i^\dag)^ *  (F) =  F \setminus \{ i \}  $$
for all $ F \in \PF (\Z) $ and 
$$
  \pi_i^* (\{j\}) = \left\{ 
 \begin{array}{ll}
 \emptyset & \mbox { if } j = i , \\
 \{ i, j \} & \mbox{ if } j \in \{i-1 , i+1 \}, \\
 \{j\} & \mbox{ else}
 \end{array}
\right. $$
for all $j \in \Z .$ 
\end{lem}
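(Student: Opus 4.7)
The plan is to verify the two additivity claims by direct case analysis on the definitions of $\pi_i^\dag$ and $\pi_i$, and then to read off the dual maps from the explicit formula provided in Lemma~\ref{lem:duall}. No new machinery is needed; the proof is essentially bookkeeping.

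For $\pi_i^\dag$, which leaves every coordinate $j\neq i$ fixed and sends coordinate $i$ to $0$, additivity is immediate. For $j\neq i$ one has $(\pi_i^\dag(\delta(k)))_j = \delta(k)_j$, whence $\sup\{(\pi_i^\dag(\delta(k)))_j:\xi_k = 1\} = \sup\{\delta(k)_j:\xi_k=1\} = \xi_j = (\pi_i^\dag(\xi))_j$; for $j = i$ both sides vanish. Lemma~\ref{lem:duall} then gives $(\pi_i^\dag)^*(\{j\}) = \{j\}$ for $j\neq i$ and $(\pi_i^\dag)^*(\{i\}) = \emptyset$, so that $(\pi_i^\dag)^*(F) = F\setminus\{i\}$ by the union formula.

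For $\pi_i$ one preliminary point needs to be settled. Since the jump term in the generator \eqref{eq:additive} is multiplied by $\xi_i$, only the restriction of $\pi_i$ to configurations with $\xi_i = 1$ affects the dynamics, and we extend the literal definition of $\pi_i$ to those $\xi$ with $\xi_i = 0$ by setting $\pi_i(\xi) = \xi$ (in particular $\pi_i(0) = 0$). With this convention $\pi_i(\delta(j)) = \delta(j)$ for $j\neq i$, while $\pi_i(\delta(i))$ is supported on $\{i-1, i+1\}$. Additivity then splits into subcases according to whether $\xi_i$ equals $0$ or $1$ and whether $j$ lies in $\{i-1, i, i+1\}$: when $\xi_i = 1$, the singleton $k = i$ in the supremum contributes $1$ at both $i\pm 1$ and $0$ at $i$, matching $\pi_i(\xi)$ there, while the terms $k\neq i$ recover $\xi_j$ at positions outside $\{i-1, i, i+1\}$; when $\xi_i = 0$ the term $k = i$ does not appear and the supremum collapses to $\xi_j$ at every coordinate.

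With additivity in hand, the dual of $\pi_i$ is read off from $\pi_i^*(\{k\}) = \{j:(\pi_i(\delta(j)))_k = 1\}$. For $k = i$ this set is empty, since $\pi_i(\delta(j))$ vanishes at $i$ for every $j$. For $k\in\{i-1, i+1\}$, the coordinate $k$ equals $1$ precisely when $j = i$ (via the activation rule) or $j = k$ (via the identity on singletons with $\xi_i = 0$), yielding $\{i, k\}$. For $k\notin\{i-1, i, i+1\}$, $\pi_i$ acts as the identity at coordinate $k$ on every singleton, so the dual is $\{k\}$. The only genuine subtlety is fixing the convention that makes $\pi_i$ additive on all of $\{0,1\}^\Z$; after that, every step reduces to a direct verification.
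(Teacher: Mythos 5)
Your verification is correct, and it matches the (entirely omitted) argument the paper has in mind: the authors simply assert the lemma is ``immediate to see.'' The one substantive point you add is worth keeping: the map $\pi_i$ as literally defined is \emph{not} additive (e.g.\ $(\pi_i(\mathbf{0}))_{i\pm 1}=1$ while the supremum over the empty set is $0$), and your convention of replacing $\pi_i$ by the identity on configurations with $\xi_i=0$ --- harmless because the corresponding term in the generator \eqref{eq:additive} carries the factor $\xi_i$ --- is exactly what is needed for the lemma, and for the subsequent duality theorem, to be true as stated. The remaining case checks and the computation of $(\pi_i^\dag)^*$ and $\pi_i^*$ from Lemma~\ref{lem:duall} are all correct.
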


Using these dual maps, we now define the generator of a  pure jump Markov process on $ \PF ( \Z )$ as follows. 
\begin{equation}\label{eq:generatordual}
\tilde L g( F) =   \gamma   \sum_{i \in \Z}[  g (  (\pi_i^\dag)^*  (F) ) - g (F ) ] + \sum_{ i \in \Z }  [ g (  \pi_i^* (F) ) - g(F ) ],
\end{equation}
for any finite set $F \subset  \Z .$
  
For any $ A \in \PF ( \Z ) $ we denote $C^A(t)$ the process with generator \eqref{eq:generatordual} and initial state $C^A(0)=A$. For $A = \{ i\}, $ we simply write $C^i (t) $ instead of $C^{\{ i\}} (t). $ 

The processes $( \eta^ \xi (t) , t \geq 0 ) $ having generator \eqref{eq:additive} and $(C^A(t), t \geq 0 ) $ satisfy the following duality property. 
\begin{theo}[Duality property]
For any $A\in \PF ( \Z) $ and $ \xi \in \{0, 1\}^{\Z}, $ we have 
\begin{equation}\label{eq:duality}
 \E  \, [\theta_A ( \eta^\xi  (t))]  = \E \, [ \theta_{ C^A (t) } (\xi ) ] . 
\end{equation} 
\end{theo}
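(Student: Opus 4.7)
The plan is to prove \eqref{eq:duality} by first establishing the corresponding identity at the generator level and then lifting it to all $t\ge 0$ via a standard semigroup argument. Writing $h(\xi,F):=\theta_F(\xi)$, which is a bounded cylinder function of $\xi$ for fixed $F\in\PF(\Z)$ and a bounded function of $F$ for fixed $\xi\in\{0,1\}^\Z$, the previous lemma gives the pointwise identities
$$\theta_F(\pi_i^\dag \xi)=\theta_{(\pi_i^\dag)^*(F)}(\xi), \qquad \theta_F(\pi_i\xi)=\theta_{\pi_i^*(F)}(\xi),$$
since $\pi_i^\dag$ and $\pi_i$ are additive.

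The key step is to verify that $L\,h(\cdot,F)(\xi)=\tilde L\,h(\xi,\cdot)(F)$ for all $\xi$ and $F$. After substituting the pointwise duality, the leak contributions in \eqref{eq:additive} and \eqref{eq:generatordual} match term by term. In the spiking part there is an apparent mismatch, since $L$ carries a factor $\xi_i$ while $\tilde L$ does not. This is resolved by the explicit form of $\pi_i^*$, which shows that $\pi_i^*(F)$ and $F$ differ at most at site $i$; hence $\theta_{\pi_i^*(F)}(\xi)-\theta_F(\xi)=0$ whenever $\xi_i=0$, so inserting $\xi_i$ in front of each summand leaves the sum unchanged. Both sums are in fact finite, because $\theta_F$ depends on at most $|F|$ sites and $\tilde L$ has only $O(|F|)$ non-trivial transitions out of $F$.

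To lift this to the semigroup, I would take $\eta^\xi$ and $C^A$ on independent probability spaces and set $u(s):=\E[\theta_{C^A(t-s)}(\eta^\xi(s))]$ for $s\in[0,t]$, so that $u(0)$ and $u(t)$ are the right- and left-hand sides of \eqref{eq:duality}, respectively. Differentiating in $s$ via the Markov property in each variable yields
$$u'(s)=\E\!\left[L\,h(\cdot,C^A(t-s))(\eta^\xi(s))\right]-\E\!\left[\tilde L\,h(\eta^\xi(s),\cdot)(C^A(t-s))\right]=0$$
by the generator identity, so $u$ is constant on $[0,t]$. The main technical obstacle is justifying the previous differentiation, since $L$ is the generator of an infinite-dimensional Markov process; however, because $h(\cdot,G)$ is a cylinder function depending on at most $|G|<\infty$ coordinates whenever $G=C^A(t-s)$, and the relevant transitions of $L$ affect only sites in a one-neighbourhood of those coordinates, one can reduce to the finite-volume setting by the standard truncation argument for additive spin systems, as in \cite{harris1976} and \cite{association}. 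Modulo this verification, \eqref{eq:duality} is a bookkeeping consequence of the additivity of the flip maps $\pi_i^\dag$ and $\pi_i$.
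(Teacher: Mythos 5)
Your argument is correct in substance, but it takes a genuinely different route from the paper: the paper gives no proof at all for this theorem and simply refers to \cite{association}, where (as in \cite{harris1976} and \cite{harris1978}) duality for additive systems is established \emph{pathwise} via the graphical construction --- both $\eta^\xi$ and $C^A$ are built from the same Poisson marks, with the dual read backwards in time, so that $\theta_A(\eta^\xi(t))=\theta_{C^A(t)}(\xi)$ holds almost surely, not merely in expectation. That route avoids entirely the analytic issue you flag (differentiating $s\mapsto \E[\theta_{C^A(t-s)}(\eta^\xi(s))]$ for an infinite spin system), and it is more in the spirit of the paper, which uses exactly this graphical construction later in the contour argument for Theorem 2. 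Your generator-level computation plus the interpolation $u(s)$ is the standard analytic alternative (Liggett's semigroup duality); it is valid here, and the boundary values $u(0)$, $u(t)$ and the sign of $u'(s)$ are all handled correctly, but it does leave the domain/interchange-of-limits verification as a cited black box, whereas the pathwise proof is self-contained.

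One point of precision. The unqualified identity $\theta_F(\pi_i\xi)=\theta_{\pi_i^*(F)}(\xi)$ that you invoke is actually false: for $F=\{i+1\}$ and $\xi\equiv 0$ the left side equals $1$ (since $(\pi_i\xi)_{i+1}=1$ by definition of $\pi_i$) while the right side equals $0$. The map $\pi_i$ as literally defined in the paper is not additive; what is additive is the \emph{gated} map $\xi\mapsto \xi$ if $\xi_i=0$, $\xi\mapsto\pi_i(\xi)$ if $\xi_i=1$, which is the transformation the generator actually applies because of the factor $\xi_i$, and it is this gated map whose dual is $\pi_i^*$. The pointwise identity therefore holds only on $\{\xi_i=1\}$. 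Your computation survives because you only ever use the identity inside a term multiplied by $\xi_i$, and your observation that $\pi_i^*(F)\,\triangle\, F\subseteq\{i\}$ correctly disposes of the case $\xi_i=0$; so the chain
$$\xi_i\bigl[\theta_F(\pi_i\xi)-\theta_F(\xi)\bigr]=\xi_i\bigl[\theta_{\pi_i^*(F)}(\xi)-\theta_F(\xi)\bigr]=\theta_{\pi_i^*(F)}(\xi)-\theta_F(\xi)$$
is valid for all $\xi$ and $F$, and the generator identity $L\,h(\cdot,F)(\xi)=\tilde L\,h(\xi,\cdot)(F)$ does hold. You should simply state the pointwise duality for the gated map (or restrict it to $\xi_i=1$) rather than for $\pi_i$ itself.
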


This theorem was proven in \cite{association} to where we refer for a proof. Intuitively, \eqref{eq:duality} means that in order to determine weather $ \eta^\xi_i  (t) = 1 $ it suffices to check whether there exists a position $ j $ belonging to $ C^i  (t) $ such that $ \xi_j = 1 .$  

We define the extinction time of process $ C^i $
\begin{align}
  \tau^i = \inf \{ t \geq 0 : C^i (t) = \emptyset \} .
\end{align}
Then it follows directly from \eqref{eq:duality} that 

\begin{cor}\label{cor:1}
$$ \P [ \xi^ {\bf 1}_0 (t) = 1 ] = \P ( \tau^ 0 > t ) , $$
where $ {\bf 1 } \in \{0, 1 \}^\Z $ denotes the configuration such that $ {\bf 1}_i = 1 $ for all $ i \in \Z . $  
\end{cor}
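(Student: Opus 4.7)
The plan is to read this off directly from the duality identity in \eqref{eq:duality}, specializing $\xi$ and $A$ appropriately. Take $A = \{0\}$ and $\xi = \mathbf{1}$. Then the left-hand side of \eqref{eq:duality} becomes
$$
\E\,[\theta_{\{0\}}(\eta^{\mathbf{1}}(t))] = \E\,[\eta^{\mathbf{1}}_0(t)] = \P[\eta^{\mathbf{1}}_0(t) = 1],
$$
using that $\eta^{\mathbf{1}}_0(t) \in \{0,1\}$.

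For the right-hand side, I would first adopt (or point out) the natural convention $\theta_\emptyset \equiv 0$, consistent with the interpretation of $\theta_F$ as a supremum. Since $\xi_j = 1$ for every $j \in \Z$, one has $\theta_F(\mathbf{1}) = 1$ whenever $F \neq \emptyset$ and $\theta_\emptyset(\mathbf{1}) = 0$, so
$$
\theta_{C^0(t)}(\mathbf{1}) = \mathbf{1}_{\{C^0(t) \neq \emptyset\}} = \mathbf{1}_{\{\tau^0 > t\}},
$$
by the definition of the extinction time $\tau^0 = \inf\{t \geq 0 : C^0(t) = \emptyset\}$. Taking expectations yields $\E\,[\theta_{C^0(t)}(\mathbf{1})] = \P(\tau^0 > t)$, and combining with the previous display gives the claimed identity.

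There is no real obstacle here; the only point deserving care is the empty-set convention for $\theta_F$, which must be compatible both with the statement of \eqref{eq:duality} and with the fact that the dynamics defined by \eqref{eq:generatordual} can send $F$ to $\emptyset$ via the leakage maps $(\pi_i^\dag)^*$. Once that convention is fixed, the corollary is an immediate specialization of the duality theorem and requires no further argument.
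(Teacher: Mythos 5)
Your proof is correct and is essentially the same as the paper's: both specialize the duality identity \eqref{eq:duality} to $A=\{0\}$ and $\xi=\mathbf{1}$, identify $\theta_{C^0(t)}(\mathbf{1})$ with $\mathbf{1}_{\{C^0(t)\neq\emptyset\}}=\mathbf{1}_{\{\tau^0>t\}}$, and take expectations. Your explicit remark about the convention $\theta_\emptyset\equiv 0$ is a sensible point of care that the paper leaves implicit.
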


\begin{proof}
By definition of  $\theta_{\{0 \}}  ,$ we have
$$
\P [ \xi^ {\bf 1}_0 (t) = 1 ]    = \E [ \theta_{\{ 0 \}} ( \xi^{\bf 1} (t) )] ,
$$
and by \eqref{eq:duality}, 
$$ \E [ \theta_{\{ 0 \}} ( \xi^{\bf 1} (t) )]
=  \E [ \theta_{\{ C^0 (t)  \}} ( { \bf 1} ) ] .$$
Then the assertion follows from
$$ \E [ \theta_{\{ C^0 (t)  \}} ( { \bf 1} ) ]  
= \P [ C^0 (t ) \neq \emptyset ] =  \P ( \tau^ 0 > t ) .
$$
\end{proof}

The main ingredient of the proof of Theorem \ref{theo:0} is the following result showing the existence of a critical parameter value for the dual process. 

\begin{theo}\label{theo:deux}
There exists $0 < \gamma_c < \infty  $ such that we have  for all $i \in \Z,$ 
\begin{equation}
 P( \tau^i < \infty   ) < 1 , \mbox{ if  $ \gamma < \gamma_c,    $}
\end{equation} 
and 
$$ P( \tau^i < \infty   ) = 1 , \mbox{ if  $ \gamma >  \gamma_c.   $}$$
\end{theo}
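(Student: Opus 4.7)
The proof splits naturally into three steps: construction of a graphical representation yielding monotonicity in $\gamma$, an upper bound $\gamma_c < \infty$ via a first moment estimate, and a lower bound $\gamma_c > 0$ via a Peierls-type contour argument on that graphical representation.

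\textbf{Step 1 (Graphical construction and monotonicity).} On a common probability space, place at each site $j \in \Z$ independent Poisson processes: a rate-$1$ family of ``flip'' marks driving the $\pi_j^*$-transitions, and, for a maximal parameter $\bar\gamma$, a rate-$\bar\gamma$ family of ``leak'' marks from which the rate-$\gamma$ family of $(\pi_j^\dag)^*$-transitions is obtained by independent thinning with retention probability $\gamma/\bar\gamma$. Because $\pi_j^*$ and $(\pi_j^\dag)^*$ each modify the state of at most one site and respect set inclusion, the resulting $(C^{F,\gamma}(t))$ is pathwise monotone in the initial set $F$ and pathwise monotone \emph{decreasing} in $\gamma$. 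In particular $\gamma \mapsto \P(\tau^i < \infty)$ is non-decreasing, and
\begin{equation*}
\gamma_c := \sup\bigl\{\gamma \geq 0 : \P(\tau^i < \infty) < 1\bigr\}
\end{equation*}
is well defined and, by translation invariance, does not depend on $i$.

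\textbf{Step 2 ($\gamma_c \leq 2$).} Reading off the dual dynamics, at a rate-$1$ mark at $j$ the size $|C|$ changes by $+1$ only when $j \notin C$ and at least one of $j \pm 1$ lies in $C$, by $-1$ only when $j \in C$ is isolated in $C$, and is unchanged otherwise; leak marks contribute $-\gamma |C|$ to the drift. Bounding the number of empty sites with an occupied neighbour by $2|C|$ yields
\begin{equation*}
\frac{d}{dt}\E\bigl|C^i(t)\bigr| \leq (2-\gamma)\,\E\bigl|C^i(t)\bigr|,
\end{equation*}
and hence $\E|C^i(t)| \leq e^{(2-\gamma)t}$ by Gronwall. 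For $\gamma > 2$ we obtain $\P(\tau^i > t) = \P(C^i(t) \neq \emptyset) \leq \E|C^i(t)| \to 0$, so $\P(\tau^i < \infty) = 1$ and therefore $\gamma_c \leq 2$.

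\textbf{Step 3 ($\gamma_c > 0$).} Here I would adapt Griffeath's contour argument for the contact process to the graphical representation above. Extinction of $C^0$ starting from $\{0\}$ forces the existence in the space--time diagram $\Z\times[0,\infty)$ of a contour separating $(0,0)$ from infinity whose boundary is supported by ``suppressing'' events, namely leak marks and rate-$1$ marks firing at sites both of whose neighbours are empty at that instant. A contour of combinatorial length $\ell$ requires of order $\ell$ such suppressing events; the leak family contributes a factor of order $\gamma$ per required mark, and the number of contours of length $\ell$ surrounding the origin grows only exponentially in $\ell$. A standard Peierls-type summation then shows that for $\gamma$ sufficiently small the total probability of extinction is strictly less than $1$, yielding $\gamma_c > 0$.

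\textbf{Main obstacle.} The difficulty lies entirely in Step~3. Unlike for the contact process, where births and deaths are carried by disjoint families of marks, here the rate-$1$ clocks act as births \emph{or} deaths depending on the neighbourhood state at the time of firing; the definition of a blocking contour must therefore be arranged so that its combinatorial length genuinely controls the number of \emph{independent} suppressing events on the Poisson graphical representation. Once that geometric bookkeeping is fixed, the Peierls estimate itself is routine.
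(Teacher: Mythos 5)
Your Steps 1 and 2 are sound. The graphical/thinning construction gives the monotonicity in $\gamma$ that the paper obtains by an explicit coupling, and your first-moment drift bound $\frac{d}{dt}\E|C^i(t)| \le (2-\gamma)\E|C^i(t)|$ is a correct (and arguably cleaner) route to $\gamma_c < \infty$ than the paper's comparison with a binary branching process; you get $\gamma_c \le 2$ where the paper claims $\gamma_c \le 1$, but only finiteness is needed.

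Step 3, however, contains a genuine gap --- one you flag yourself --- and it is the heart of the theorem. Two ingredients are missing. First, the structural fact that makes the Peierls estimate work at all: because $\pi_j^*(\{j, j\pm 1\}) = \{j, j\pm 1\}$, a rate-$1$ mark at a \emph{non-isolated} site of the dual cluster does not remove that site; hence, as long as the cluster is simply connected, a non-isolated particle can die \emph{only} through a leak mark, i.e.\ at rate $\gamma$ (Lemma 4 in the paper). Your list of ``suppressing events'' includes rate-$1$ marks at sites with both neighbours empty, but those are isolated-particle deaths occurring at rate $1+\gamma$; they cost probability $O(1)$, not $O(\gamma)$, so your claim that the contour contributes ``a factor of order $\gamma$ per required mark'' is not justified as stated. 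Second, and consequently, one needs the combinatorial bookkeeping showing that the cheap steps cannot dominate: encoding the clockwise contour of length $4n$ as $2n$ direction triplets, the paper shows $N(drd)+N(dru)+N(uru) = n - N(urd)$ with $N(urd) \le n/2+1$, so at least $n/2 - 1$ triplets correspond to deaths of non-isolated particles and each carries a factor $\le \gamma$; against the entropy bound $16^n$ on the number of contours this gives $\sum_n 16^n \gamma^{n/2-1} < 1 - \frac{1}{3}$ for $\gamma$ small. Without (a) the identification of exactly which contour steps are forced to be leak events and (b) the quantitative lower bound on how many of them there are, the Peierls sum does not close, and the positivity of $\gamma_c$ is not established.
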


The proof uses the following lemma with the following definitions. 
We say that $ C^A (t) $ is simply connected if $|  C^A (t)| \geq 2 $ and if  $ k, j  \in C^A (t) $ with $ j < k $ implies that $ l \in C^A ( t) $ for all $ j \le l \le k .$ If $C^A(t) $ is not simply connected, we say that $C^A (t) $ is  disconnected. Let now $  \tau_A := \inf \{ t > 0 : C^A (t) \mbox{ disconnected  }  \} .$

\begin{lem}\label{prop:utile}\
  
\begin{enumerate}
\item Let $A$ be a simply connected set with $ |A| \geq 2.$ Then, for $ t < \tau_A,$ a particle belonging to $C^A (t )$ disappears at rate $ \gamma .$  
\item
Isolated particles within $C^i ( t) $ disappear at rate $ 1 + \gamma .$
\end{enumerate}
\end{lem}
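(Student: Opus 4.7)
The plan is to directly read off the exit rate of an individual particle from the explicit transition rules of the dual process $C^A(t)$. The generator \eqref{eq:generatordual} decomposes into two independent families of Poisson clocks indexed by sites $i \in \Z$: at each $i$, a leakage clock of rate $\gamma$ triggers $F \mapsto F \setminus \{i\} = (\pi_i^\dag)^*(F)$, and a spiking clock of rate $1$ triggers $F \mapsto \pi_i^*(F)$. The crucial preliminary step is to unpack $\pi_i^*$ on a general set: from $\pi_i^*(F) = \bigcup_{k \in F} \pi_i^*(\{k\})$ and the formulas in Lemma \ref{lem:duall}, one reads off that for every $j \neq i$, $j \in \pi_i^*(F) \iff j \in F$, while $i \in \pi_i^*(F) \iff \{i-1, i+1\} \cap F \neq \emptyset$. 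In particular, a spiking transition at site $i$ only modifies the $i$-coordinate of $F$.

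Fix an arbitrary $j \in C^A(t)$. From the observation above, no transition at a site $i \neq j$ can remove $j$ from $C^A(t)$. The only candidate transitions that can remove $j$ are therefore the leakage clock at $j$ (rate $\gamma$), which always removes $j$, and the spiking clock at $j$ (rate $1$), which removes $j$ precisely when $\{j-1, j+1\} \cap C^A(t) = \emptyset$. Part~(2) then follows immediately: an isolated $j \in C^i(t)$ has no neighbor in the set, so both mechanisms act and the disappearance rate is $\gamma + 1$.

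For Part~(1), fix $t < \tau_A$, so that $C^A(t)$ is simply connected and of cardinality at least $2$, say $C^A(t) = \{a, a+1, \dots, b\}$ with $a < b$. Then every $j \in C^A(t)$ has at least one nearest neighbor in $C^A(t)$, so the spiking clock at $j$ removes $j$ only to immediately re-insert it; leakage at $j$, firing at rate $\gamma$, is therefore the sole mechanism that can remove $j$. This bookkeeping is valid precisely as long as the set remains simply connected of size at least two, which is why the statement is restricted to $t < \tau_A$. The proof is essentially an exercise in enumerating transitions, so there is no serious analytic obstacle; the main conceptual point is the observation that $\pi_i^*$ acts only on the $i$-coordinate, from which everything else follows.
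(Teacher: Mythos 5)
Your proposal is correct and follows the same route as the paper's (much terser) proof: the key observation in both is that $\pi_i^*$ only affects the $i$-coordinate and that $i\in\pi_i^*(F)$ exactly when $F$ contains a neighbor of $i$ (equivalently, $\pi_i^*(\{i,i\pm 1\})=\{i,i\pm 1\}$), so that in a simply connected set only the rate-$\gamma$ leakage clock can remove a particle, while an isolated particle is also removed by its own rate-$1$ spiking clock. You simply spell out the bookkeeping that the paper declares ``immediate,'' and your unpacking of $\pi_i^*(F)$ from Lemma~\ref{lem:duall} is accurate.
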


\begin{proof}
Item 1.\ follows immediately from the definition of $ \pi_i^* , $ observing that $ \pi_i^* ( \{ i, i \pm 1\} ) = \{ i, i \pm 1 \} .$ Item 2.\ is evident.
\end{proof}

\begin{rem}
Experts in interacting particle systems would rephrase Item 1. of the above Lemma by saying that, when starting with a simply connected set $A,$ up to the first time of getting disconnected, $C^A ( t) $ behaves like a classical contact process having infection rate $1$ and recovery rate $\gamma.$
\end{rem}

In the sequel, we shall also need a monotonicity property with respect to $\gamma.$ Namely, write  $ (C^{i, \gamma } (t), t \geq 0 ) $ for the dual process evolving under the parameter $\gamma .$ Then the following holds. 

\begin{lem}
For $\gamma_1 < \gamma_2 , $ for any $ t \geq 0 $ we have
\begin{equation}
\label{eq:monotone}
P(  C^{i, \gamma_1  } (t) \neq \emptyset ) \geq P(  C^{i, \gamma_2  } (t) \neq \emptyset ).
\end{equation}

\end{lem}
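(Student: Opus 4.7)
The plan is to establish the inequality by a graphical coupling of the two dual processes on a common probability space. Concretely, I would take independent Poisson processes $(\mathcal{M}_j)_{j \in \Z}$ of rate $1$ and independent Poisson processes $(\mathcal{N}^{\gamma_1}_j)_{j \in \Z}$ and $(\mathcal{R}_j)_{j \in \Z}$ of rates $\gamma_1$ and $\gamma_2 - \gamma_1$ respectively, all mutually independent. Set $\mathcal{N}^{\gamma_2}_j := \mathcal{N}^{\gamma_1}_j \cup \mathcal{R}_j$, which is Poisson of rate $\gamma_2$. Construct $C^{i,\gamma_1}$ by applying $\pi_j^*$ at each atom of $\mathcal{M}_j$ and $(\pi_j^\dag)^*$ at each atom of $\mathcal{N}^{\gamma_1}_j$, and $C^{i,\gamma_2}$ analogously but using $\mathcal{N}^{\gamma_2}_j$; both chains start from $\{i\}$.

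Next I would verify that the two maps governing the dual dynamics are monotone for set inclusion. For $(\pi_j^\dag)^*(F) = F \setminus \{j\}$ monotonicity is immediate. For $\pi_j^*$ it follows from the representation
\begin{equation*}
\pi_j^*(F) = \bigcup_{k \in F} \pi_j^*(\{k\})
\end{equation*}
provided by Lemma~2, since the union of a larger family of sets is larger.

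With these ingredients in hand, I would argue by induction on the successive event times of the (locally finite) superposition $\bigcup_j (\mathcal{M}_j \cup \mathcal{N}^{\gamma_2}_j)$ that the coupling satisfies
\begin{equation*}
C^{i,\gamma_1}(t) \supseteq C^{i,\gamma_2}(t) \qquad \text{for all } t \geq 0, \text{ almost surely.}
\end{equation*}
At $t=0$ both equal $\{i\}$. At a common event time (an atom of $\mathcal{M}_j$ or of $\mathcal{N}^{\gamma_1}_j$) both processes apply the same monotone map, preserving inclusion. At an atom of $\mathcal{R}_j$, only $C^{i,\gamma_2}$ removes the site $j$ while $C^{i,\gamma_1}$ stays the same, so inclusion is again preserved.

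The conclusion $\P(C^{i,\gamma_1}(t) \neq \emptyset) \geq \P(C^{i,\gamma_2}(t) \neq \emptyset)$ is then immediate from $\{C^{i,\gamma_2}(t) \neq \emptyset\} \subseteq \{C^{i,\gamma_1}(t) \neq \emptyset\}$. I do not anticipate any serious obstacle; the only substantive point is the set-monotonicity of $\pi_j^*$, which is essentially built into the additivity/duality framework already set up in Section~\ref{sec:dual}.
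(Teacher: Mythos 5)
Your proposal is correct and is essentially the paper's own argument: the same coupling via shared rate-$1$ and rate-$\gamma_1$ Poisson processes plus an independent rate-$(\gamma_2-\gamma_1)$ process whose atoms act only on $C^{i,\gamma_2}$, yielding $C^{i,\gamma_2}(t)\subseteq C^{i,\gamma_1}(t)$. Your explicit verification that $(\pi_j^\dag)^*$ and $\pi_j^*$ are monotone for set inclusion is a point the paper leaves implicit, but it does not change the route.
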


\begin{proof}
The proof is done by a coupling of $ (C^{i, \gamma_1  } (t) , t \geq 0) $ and  $(C^{i, \gamma_2  } (t) , t \geq 0) $ which is defined as follows. Both processes are defined by using the same rate $1$ Poisson processes $ N_i (t) $ as before. Moreover, we use the rate $\gamma_1 $ process $ N_i^{ \dag, \gamma_1} (t) $ for both processes, and we add an independent rate $\gamma_2 - \gamma_1 $ process $N_i^{ \dag, \gamma_2 - \gamma_2 } (t)  $ in order to realize the right total leakage rate $ \gamma_2$ for $(C^{i, \gamma_2  } (t) , t \geq 0) . $ All jumps induced by  $N_i^{ \dag, \gamma_1} $ and by $ N_i$ are therefore common jumps of both process, and at jump times of $N_i^{ \dag, \gamma_2 - \gamma_2 }  , $ a transformation $ (\pi_i^\dag)^* $ is applied only to $C^{i, \gamma_2  }   $ implying that 
$$ C^{i, \gamma_2  } (t)  \subset C^{i, \gamma_1  } (t) $$
for all $ i $ and for all $ t.$ From this the result follows. 
\end{proof}

We are now able to give the 
\begin{proof}[Proof of Theorem \ref{theo:deux}]
We first show that $\tau^i < \infty $ almost surely for sufficiently large $ \gamma.$ For that sake, observe that we can couple  $|C^i (t)| $ with a branching process $( Z (t), t \geq 0)$ having infinitesimal generator 
$$ L f (n) = n\left[  ( f(n+1 ) - f(n) ) + \gamma  ( f(n-1) - f(n))\right] , n \in \N ,$$
for any bounded test function $ f $ defined on $\N. $ 

The coupling is done in the following way. We start with $ Z (0 ) = 1 .$ Every time a transformation $ \pi^ \dag_j $ with $ j \in C^i (t) $ appears, the process $Z (t) $ decreases by $1.$ Any time a transformation  $ \pi_j^ * $ with $ j \in C^i (t) $ appears, we increase the value of $Z(t) $ by $1. $  

With this coupling, $ |C^i (t)| \le  Z (t) $ almost surely for all $t \geq 0.$ It is well-known that $ \gamma \geq 1 $ implies $ P ( \lim_{t \to \infty } Z (t) = 0 ) = 1, $ implying that $ \gamma_c \le 1.$  

We now show that for all $ \gamma $ such that $ \gamma \le \gamma_c,$ $\tau^i = + \infty $ with positive probability. The proof relies on the classical graphical construction of $ C^i (t) $ which has been introduced by \cite{harris1978}. We work within the space-time diagram $ \Z \times [0, \infty[.$ For each site $ i \in \Z, $ we consider two independent Poisson processes $N_i  $ and $N_i^\dag.  $  $N_i  $ has intensity $ 1$ and $ N_i^\dag $ has intensity $\gamma .$ Let us  write  $ (T_{i, n} )_{ n \geq 1 } $ and $ (T_{i, n }^\dag)_{n\geq 1 }$ for their respective jump times. The processes associated to different sites are all independent. We draw 
\begin{itemize}
\item
arrows pointing from $ (i-1 , T_{i, n}  )$ to $ (i , T_{i, n} ) $ and from $ (i+ 1 , T_{i, n } ) $ to $ (i, T_{i, n } ) , $ for all $ n \geq 1, i \in \Z  ;$
\item 
$\delta$'s at all $ (i, T_{i, n }^\dag ) , $ for all $ n \geq 1, i \in \Z.$ 
\end{itemize}
By convention, we suppose that time is going up. 

In this way we obtain a random graph $ {\mathcal P}.$ We say that there is a path from $ (i, 0) $ to $ (j , t) $ in $ {\mathcal P}$ if there is a chain of upward vertical and directed horizontal edges which goes from $ (i, 0) $ to $ (j , t) $ without passing through a $\delta.$ Notice that 
$$ C^i (t)  = \{ j : \mbox{ there exists a path from $(i, 0) $ to $ (j, t ) $ }\}.$$ 

It is clear that $\tau^i < \infty $ if and only if $ C^i := \bigcup_{t \geq 0} C^i ( t) $ is a finite set. Inspired by classical contour techniques we will show that $P( \tau^i < \infty ) =  P( |C^i| < \infty  ) < 1$ for sufficiently small values of $\gamma .$

This is done as follows. On $ |C^i | < \infty , $ we draw the contour of $C^i $ following \cite{griffeath}. For this sake, we embed $ \Z \times \R_+ $ in $ \R \times \R_+ $ and define 
$$ E := \{ (y, t ) : \| y - j \| \le \frac12, \mbox{ for some } j \in C^i (t) , t \geq 0 \} .$$
Moreover, we write $\tilde E $ for the set that one obtains from $E$ by filling in all holes of $E.$ We write $ \Gamma $ for the boundary of $ \tilde E , $ oriented clockwise. Starting from $ (i- \frac12, 0 ) , $ $\Gamma 
$ consists of $4n $ alternating vertical and horizontal edges for some $ n \geq 1 $ which we encode as a succession of direction vectors $(D_1, \ldots, D_{2n } ).$ Each of the $D_i$ can be one the seven triplets 
$$ dld, drd, dru, ulu, uru, urd, dlu, $$
where $d,u, l$ and $r$ stand for down, up, left and right, respectively. We start at $ (i-\frac12, 0) $ and proceed clockwise around the curve. 

Writing $N (dld) , \ldots $ for the number of appearances of the different direction vectors, we have that $ N(dlu) = 1 $ ($dlu$ is the last triplet of $ \Gamma $ which appears exactly one single time) and 
$$ N(dld) + N(ulu) = n-1, \; N(drd) + N(dru) + N (uru) + N(urd) = n, $$
$N(urd) = N( dru)+1.$ Moreover, 
$$ N(urd) \le n/2 +1 .$$ 

 When following the contour of the curve $\Gamma, $ at each step the choice of a given direction vector depends on which one of the relevant Poisson processes affecting this edge occurs first.  We first observe that the occurrence of either $drd$ or $dru $ or $uru$ can be upper bounded by $\gamma. $ This is due to the fact that in these events, the dying particles are non-isolated and that by Item 1.\ of Lemma \ref{prop:utile},  non isolated particles die at rate $\gamma.$ The associated probability is $ \frac{\gamma}{3 + \gamma } $ or $ \frac{\gamma}{2 + \gamma },$ depending on whether the dying particle has two neighbors or just a single one. In any case, it can be upper bounded by $ \gamma .$  Finally, recall that by the first part of the proof, we can restrict ourselves to the case  $ \gamma < 1.$ We upper bound the probabilities  of the  remaining directions by $1.$ Therefore we obtain the following list of upper bounds 
$$
\begin{array}{cl}
dld & \mbox{ occurs with probability at most } 1 \\
drd & \mbox{ occurs with probability at most }  \gamma \\
dru & \mbox{ occurs with probability at most }  \gamma \\
ulu & \mbox{ occurs with probability at most  }  1 \\
uru & \mbox{ occurs with probability at most }  \gamma \\
urd & \mbox{ occurs with probability at most }  1 \\
dlu & \mbox{ occurs with probability at most }  1 .
\end{array}
$$

In the above prescription, we have upper bounded the probability of dying for isolated particles, represented by ``urd'', which is given by $ \frac{1+ \gamma}{3 + \gamma } ,$ by $ 1.$ 

For a given contour having $ 4n $ edges, with $ n \geq 3 ,$  its probability is therefore upper bounded by 
$$ \gamma^{ N(drd) + N(dru) + N (uru) } = \gamma^{n- N(urd) } \le \gamma^{n/2 - 1 }.$$

Finally, for $n=1,$ the probability of appearance of a contour of length $4$  is equal to $ \frac{1+ \gamma}{3 + \gamma } . $ Moreover, for $n=2,$  the probability of appearance of a contour of length $8 $ is equal to 
\begin{multline*}
 P ( D_1 = ulu, D_2 = urd, D_3 = drd) +P( D_1 = ulu, D_2 = uru, D_3 = urd ) +\\
 P ( D_1 = uru, D_2 = urd, D_3 = dld) + P ( D_1 = urd, D_2 = drd, D_3 = dld) \le 4 \gamma .
\end{multline*}

To conclude, notice that for each triplet we have $ 4 $ choices (the first entry of a given triplet is always fixed by the previous triplet in the sequence, and for $D_1, $ the first entry is always $u$). Therefore, a very rough upper bound on the total number of possible triplets $ (D_1, \ldots, D_{2n } ) $ is given by $  4^{2n} = 16^n .$ 
Therefore,  for all $ \gamma < \frac{1}{(16)^2 }, $
$$
P( \tau^i < \infty ) \le \frac{1+ \gamma}{3 + \gamma } + 4 \gamma + \sum\nolimits_{n\geq 3 }(16)^n \gamma^{n/2 - 1 } =  \frac{1+ \gamma}{3 + \gamma } + 4 \gamma  + (16)^2 \frac{ 16 \sqrt{\gamma}}{1 - 16 \sqrt{\gamma} }  .
$$
As $\gamma \to 0, $ the right hand side of the above inequality tends to $ \frac{1}{3} < 1 $ as $\gamma \to 0.$  As a consequence, $ \gamma_c > 0 .$ 

Finally, notice that by \eqref{eq:monotone}, the function   $ \gamma \mapsto P ( \lim_{t \to \infty } | C^{i, \gamma  } (t)|  = 0) $ is  increasing. Therefore, 
$$ \gamma_c = \inf \{ \gamma : P ( \lim_{t \to \infty } | C^{i, \gamma  } (t)|  = 0 )  = 1 \} ,$$
and the first two parts of the proof show that $ 0 < \gamma_c < \infty .$ 
This concludes the proof. 
\end{proof} 

\section{Proof of Theorem \ref{theo:0}}\label{sec:proof}
We only have to consider the case $ \gamma < \gamma_c;$ the case $ \gamma > \gamma_c $ is immediate. 

The proof follows from standard arguments (cf. e.g. Proof of Theorem 3.10 in Chapter VI of \cite{Liggett1985}).

We have to show that for each $i \in \Z, $ $\eta_i^{\bf 1 } ( t) = 1 $ infinitely often if $ \gamma < \gamma_c.$ It is sufficient to give the proof for $ i = 0.$ For that sake,  recall that by Corollary \ref{cor:1}, 
$$ P ( \eta^{\bf 1}_0 (t)=  1 ) = P ( \tau^0 > t ) \downarrow  P ( \tau^0 = \infty ) > 0 $$ 
as $ t \to \infty .$ 
Therefore, 
$$ A_t := \int_0^t P ( \eta^{\bf 1}_0 (s) = 1 ) ds \to \infty $$
as $ t \to \infty .$ Now let 
$$ \sigma_s := \inf \{ u\geq s : \eta^{\bf 1}_0 (u) = 1 \} .$$ Then
for $ s < t , $ and by the strong Markov property, 
\begin{multline*}
 A_t - A_s = E^{\bf 1} \int_s^t 1_{ \{ \eta_0 ( u) = 1 \} }du 
= E^{\bf 1} \left( \int_{\sigma_s}^t 1_{ \{ \eta_0 ( u) = 1 \} }du ; \sigma_s \le t \right) \\
=  E^{\bf 1} \left( 1_{\{\sigma_s \le t \} } E ( \int_{\sigma_s}^t 1_{ \{ \eta_0 ( u) = 1 \} }du | {\cal F}_{\sigma_s} )\right) \\
\le  E^{\bf 1} \left( 1_{\{\sigma_s \le t \} } E^{\eta^{\bf 1} (\sigma_s) } (\int_0^t  1_{ \{ \eta_0 ( u) = 1 \} }du) \right) \\
\le A_t P^{\bf 1 } (\sigma_s \le t) ,
\end{multline*}
where we have used that  
$$ E^{\eta^{\bf 1} (\sigma_s) } \int_0^t  1_{ \{ \eta_0 ( u) = 1 \} }du \le A_t .$$
As a consequence,
$$ P^{\bf 1 } (\sigma_s \le t) \geq 1 - \frac{ A_s}{A_t } , $$
whence 
$$ P^{\bf 1 } (\sigma_s < \infty ) = 1 .$$ 
In particular, we obtain that $ P^{\bf 1 } ( \forall n : \exists t \geq n : \eta_0 ( t) = 1 \} = 1 , $ that is, that $P^{\bf 1} ( \eta_0 (\cdot ) = 1  \mbox{ infinitely often } ) = 1.$

\section{Discussion of the case $\gamma = 0 $}\label{sec:4} If $ \gamma = 0, $ then the dual process $C^i (t) $ has the following very particular feature.  It either dies out at its first jump time or it survives forever. We write $ \tau^i  = \inf \{ t : C^i (t) = \emptyset \}$ for the extinction time of $ C^i.$ 
\begin{prop}
Let $T_1 $ be the first jump time of $C^i ( t) .$ Then $ \P ( T_1 = \tau^i ) = \frac13 , $ and on $ \{ T_1 \neq \tau^i \}, $ the process survives forever, that is, $\{ T_1 \neq \tau^i \} = \{ \tau^i = \infty \} ,$ $ \P(\tau^i = \infty) = \frac23 ,$ and on $ \{ \tau^i = \infty \}, $ $ \lim_{t \to \infty} C^i  (t) = \Z.$ 
\end{prop}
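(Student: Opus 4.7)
The plan is to describe explicitly the transitions of $C^i(t)$ when $\gamma=0$, exploiting the simple structure of $\pi_j^*$.

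First I would compute the transition rates out of the initial state $C^i(0)=\{i\}$. From the formula for $\pi_j^*$, we have $\pi_j^*(\{i\}) = \{i\}$ for every $|j-i|\geq 2$, so only three sites affect the state: $\pi_i^*(\{i\})=\emptyset$, $\pi_{i-1}^*(\{i\})=\{i-1,i\}$ and $\pi_{i+1}^*(\{i\})=\{i,i+1\}$, each triggered at rate $1$. Since $\gamma=0$, the $(\pi_j^\dag)^*$ transitions play no role. Consequently $T_1\sim \mathrm{Exp}(3)$ and, at time $T_1$, the three outcomes $\emptyset$, $\{i-1,i\}$, $\{i,i+1\}$ occur with probability $1/3$ each. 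This gives $\P(T_1=\tau^i)=1/3$ and identifies the state at $T_1$ on $\{T_1\neq \tau^i\}$ as a simply connected set of cardinality $2$.

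Next I would prove that, once $C^i$ enters a simply connected state of size at least $2$, it stays simply connected and only grows by nearest-neighbor extensions at its two boundary sites, each at rate $1$. Let $C=\{k,k+1,\dots,m\}$. A direct case analysis from the definition of $\pi_j^*$ yields $\pi_j^*(C)=C$ whenever $j\notin\{k-1,m+1\}$ (indeed $\pi_j^*(\{j\})=\emptyset$ but $\pi_j^*(\{j-1\})\cup\pi_j^*(\{j+1\})$ reintroduces $j$ when $j$ is interior or on the boundary of $C$, while points of $C$ outside $\{j-1,j,j+1\}$ are unaffected), and $\pi_{k-1}^*(C)=C\cup\{k-1\}$, $\pi_{m+1}^*(C)=C\cup\{m+1\}$. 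Combined with $\gamma=0$ (so Lemma \ref{prop:utile}(1) gives zero removal rate), the process $C^i(t)$ remains simply connected for all $t\geq T_1$, and its endpoints $L(t)=\min C^i(t)$ and $R(t)=\max C^i(t)$ evolve independently: $-L(\cdot)$ and $R(\cdot)$ are translated Poisson processes of rate $1$, driven by disjoint families of the underlying Poisson clocks.

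From this I would conclude that, on $\{T_1\neq\tau^i\}$, $L(t)\to-\infty$ and $R(t)\to+\infty$ almost surely, hence $\tau^i=\infty$ and $\lim_{t\to\infty}C^i(t)=\Z$, while on $\{T_1=\tau^i\}$ the set is empty from time $T_1$ onward. Together with the first step this yields $\{T_1\neq\tau^i\}=\{\tau^i=\infty\}$, $\P(\tau^i=\infty)=2/3$, and $\lim_{t\to\infty} C^i(t)=\Z$ on that event. There is no real obstacle here; the only step requiring care is the case-by-case verification that $\pi_j^*(C)=C$ for every $j$ interior or on the boundary of a simply connected $C$, which relies on the cancellation between $\pi_j^*(\{j\})=\emptyset$ and $\pi_j^*(\{j\pm 1\})=\{j,j\pm 1\}$.
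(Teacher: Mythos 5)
Your proposal is correct and follows essentially the same route as the paper: a rate computation at the first jump giving the three equally likely outcomes, followed by the observation that a simply connected set of size at least two only grows, with each endpoint advancing at rate $1$. The only difference is that you spell out the case analysis showing $\pi_j^*(C)=C$ for $j$ interior to or on the boundary of $C$, which the paper leaves as ``easy to prove.''
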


\begin{proof}
Either at time  $T_1 ,$ a transformation $  \pi_i^* $ is applied, in which case $ C^i ( T_1 ) = \emptyset $ and $T_1 = \tau^i . $ Or a transformation $  \pi_j^* $ for some $ j = i \pm 1$ is applied. Each of these transformations arrives at rate $1,$ hence the probability that $  \pi_i^* $ arrives first is $ \frac13. $ Finally, if $ \pi_j^* $ arrives first for $ j = i \pm 1, $ then $ C^i (T_1 )= \{ i , i \pm 1 \} .$ It is easy to prove that in this case, $ C^i ( t) $ is simply connected and strictly growing, that is, if $ j < k $ and $ j , k  \in C^i (t) ,$ then for all $ l $ with $ j \le l \le k , $ $ l \in C^i ( t+s ) $ for any $ s \geq 0 .$ Finally, write $ r (t) := \sup \{ j : j \in C^i (t) \} $ and $ l(t) = \inf \{ j : j \in C^i ( t) \} .$ Then $r (t) \to r(t) + 1 $ at rate $ 1 $ and $ l(t) \to l(t) - 1 $ at rate $1.$ 

\end{proof}
Let us come back to the original process in ``forward''-time, $(\eta (t), t \geq 0)  .$ This process is a pure spiking process without any leak effect. Write $ {\mathcal X} = \{ \xi \in \{ 0, 1 \}^\Z : ( 1 - \xi_i) ) ( 1 - \xi_{ i\pm 1 } ) = 0 \; \forall i \in \Z \} $  for the set of all configurations where no neighboring neurons have both potential $0.$ It is evident that $ {\mathcal X}$ is invariant under the evolution. 

In the following, we write $ P_t $ for the transition semigroup of $(\eta (t), t \geq 0 ) . $

\begin{prop}
Let $ \gamma = 0 $ and $  {\bf 1} $ be the configuration with $  {\bf 1}_i  = 1 $ for all $ i \in \Z.$ Then $\frac1t \int_0^t P_s ( {\bf 1} , \cdot ) ds $ converges to an invariant measure $ \mu_0 $ which concentrates on $ {\mathcal X}, $ that is, $ \mu_0 ( {\mathcal X}) = 1.$ $\mu_0 $ is the only invariant measure of $(\eta (t))_{t \geq 0} $ on $ \{0, 1 \}^\Z \setminus \{ \bf 0 \} , $ where $ \bf 0 $ denotes the all-zero configuration. Under $ \mu_0, $ the density of $1$ is given by $ 2/3 .$
\end{prop}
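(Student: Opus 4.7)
The plan is to derive all three claims from the duality identity applied to the initial configuration ${\bf 1}$, combined with the $\gamma=0$ dichotomy for $C^A(t)$ provided by the preceding proposition. First, I would verify that $\mathcal{X}$ is invariant under the dynamics: if $\xi \in \mathcal{X}$ and $\xi_i = 1$, the only coordinate of $\pi_i(\xi)$ that becomes $0$ is $i$, whose two neighbors are set to $1$, so $\pi_i(\xi) \in \mathcal{X}$. Since ${\bf 1} \in \mathcal{X}$ and $\mathcal{X}$ is closed in the product topology, the process $\eta^{\bf 1}(t)$ never leaves $\mathcal{X}$.

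Next, by duality and ${\bf 1}_j \equiv 1$, for any $A \in \PF(\Z)$,
\[
\E\Bigl[\prod_{i \in A}(1 - \eta_i^{\bf 1}(t))\Bigr] = \E\Bigl[\prod_{j \in C^A(t)}(1 - {\bf 1}_j)\Bigr] = \P(C^A(t) = \emptyset),
\]
which converges as $t \to \infty$ to $\P(\tau^A < \infty)$, where $\tau^A := \inf\{t : C^A(t) = \emptyset\}$. The cylinder functions $\{\prod_{i \in A}(1 - \xi_i) : A \in \PF(\Z)\}$ are measure-determining (their linear span contains every cylinder indicator), so $P_t({\bf 1}, \cdot)$ converges weakly to a measure $\mu_0$ satisfying $\int \prod_{i \in A}(1 - \xi_i)\, d\mu_0 = \P(\tau^A < \infty)$. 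A fortiori the Cesàro average converges to $\mu_0$, which is then invariant by the standard Feller argument, and $\mu_0(\mathcal{X}) = 1$ since each $P_t({\bf 1}, \cdot)$ is supported on the closed set $\mathcal{X}$. Taking $A = \{0\}$ yields $\int \xi_0\, d\mu_0 = 1 - \P(\tau^0 < \infty) = 1 - \tfrac{1}{3} = \tfrac{2}{3}$ by the preceding proposition.

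For uniqueness, let $\nu$ be any invariant measure with $\nu(\{{\bf 0}\}) = 0$. Invariance and duality give
\[
\int \prod_{i \in A}(1 - \xi_i)\, d\nu(\xi) = \int \E\Bigl[\prod_{j \in C^A(t)}(1 - \xi_j)\Bigr]\, d\nu(\xi).
\]
The additivity of the graphical construction gives $C^A(t) = \bigcup_{i \in A} C^i(t)$, and by the preceding proposition each $C^i$ either vanishes at its first jump or grows monotonically to $\Z$, so on $\{\tau^A = \infty\}$ one has $C^A(t) \uparrow \Z$. Hence for every $\xi \neq {\bf 0}$ the integrand tends to $0$ on $\{\tau^A = \infty\}$, while on $\{\tau^A < \infty\}$ it eventually equals $1$. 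Dominated convergence together with $\nu(\{{\bf 0}\}) = 0$ then yields $\int \prod_{i \in A}(1 - \xi_i)\, d\nu = \P(\tau^A < \infty)$, matching $\mu_0$ on the measure-determining family, so $\nu = \mu_0$.

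The main obstacle is justifying the pointwise limit $C^A(t) \uparrow \Z$ on the survival event. This hinges on the rigid $\gamma = 0$ structure: without leakage each $C^i$ either dies at its initial $\pi_i^*$-transition or becomes simply connected with its extremes drifting monotonically to $\pm\infty$, so the union over $i \in A$ eventually covers any finite window. Everything else amounts to bookkeeping around the duality identity and the dominated convergence theorem.
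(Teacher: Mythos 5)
Your argument is correct, and the uniqueness step takes a genuinely different route from the paper's. For existence, density $2/3$, and concentration on $\mathcal{X}$ you and the paper do essentially the same thing (duality applied to ${\bf 1}$ plus the $\gamma=0$ dichotomy $\P(\tau^i<\infty)=\tfrac13$), except that by tracking the full measure-determining family $\{\prod_{i\in A}(1-\xi_i)\}$ you get genuine weak convergence of $P_t({\bf 1},\cdot)$ rather than only Ces\`aro convergence along a compactness subsequence --- a slightly stronger conclusion. The real divergence is in uniqueness: the paper invokes attractiveness of the additive system to get $\mu\le\mu_0$, computes only the single-site marginal $\mu(\{\xi:\xi_i=1\})=\tfrac23$ via duality, and then appeals to Liggett's Corollary~2.8 (Chapter~II) to upgrade stochastic domination plus equal densities to equality of measures. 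You instead run the duality computation for \emph{every} finite $A$: using $C^A(t)=\bigcup_{i\in A}C^i(t)$ and the fact that on survival some $C^i$ eventually covers any finite window, you show $\int\prod_{i\in A}(1-\xi_i)\,d\nu=\P(\tau^A<\infty)$ for any invariant $\nu$ charging no mass to ${\bf 0}$, which pins down $\nu$ on a measure-determining family. Your route is self-contained (no monotonicity/coupling input, no external corollary) at the cost of the slightly heavier bookkeeping with correlation functions of all orders; the paper's route is shorter given the standard machinery. Both arguments rest on the same key fact, namely that $C^i(\infty)=\Z$ on the survival event, and both are sound.
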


\begin{proof}
 By compactness of $ \{0, 1 \}^\Z , $ there exists a sequence $ t_n \to \infty $ and a measure $ \mu_0 $ such that 
$$ \frac{1}{t_n} \int_0^{t_n } P_s ({\bf 1 }, \cdot ) ds \to \mu_0 $$
as $n \to \infty .$ Moreover, $ \mu_0 $ is necessarily an invariant measure. Since each $ P_s ( {\bf 1}  , \cdot ) $ is supported by $ {\mathcal X}, $ obviously, $ \mu_0 ( {\mathcal X} ) = 1 $ as well. By duality, we have 
$$ P_s ({\bf 1 },  \{ \xi : \xi_i = 1 \} ) = \P ( C^i (s) \neq \emptyset ) = \P ( \tau^i > s )= e^{-3 s } + ( 1 - e^{-3s}) \frac23 \to \frac23 $$
as $ s \to \infty, $ whence $ \mu_0 ( \{ \xi : \xi_i  = 1 \} ) = \frac23.$  

We now prove that $ \mu_0 $ is the only such invariant measure. For that sake, let $ \mu $ be another invariant measure with $ \mu (\{{\bf 0 }\}) = 0 .$ By attractiveness, $ P_s ( {\bf 1}, \cdot ) \geq \mu P_s (\cdot ) $\footnote{Here, we write $ \mu_1 \le \mu_2 , $ if for all continuous monotone functions $ f : \{ 0, 1\}^\Z \to \R , $ $\int f d \mu_1 \le \int f d \mu_2 .$} for all $ s \geq 0, $ and therefore, $ \mu \le \mu_0 .$ By duality and since $ C^i ( \infty ) = \Z $ on $ \{ \tau^i = \infty \}, $ 
$$ \mu ( \{ \xi : \xi_i = 1 \} ) = \int \mu (d \xi ) \P ( \xi \cap C^i ( \infty ) \neq \emptyset) =  \frac23  \mu ( \{ \xi : \xi \neq {\bf 0} \} ) = \frac23.  $$ 
Thus, 
$$ \mu \le \mu_0 \mbox{ and for all } i, \, \mu ( \{ \xi : \xi_i = 1 \} ) = \mu_0 ( \{ \xi : \xi_i = 1 \} ).$$
This is only possible if $ \mu_0 = \mu$ (see e.g.\ Corollary 2.8, Chapter II of \cite{Liggett1985}). 
\end{proof}

Let us write $\eta^{\mu_0} (t) $ for the process starting in its invariant measure. 

\begin{cor}[Local Markov property]
Fix $i \in \Z.$ Then the process $( \eta^{\mu_0}_i (t) , t \geq 0 )$ is Markov, switching from $0$ to $1$ at rate $ \frac23 $ and from $ 1 $ to $0$ at rate $ 1.$ 
\end{cor}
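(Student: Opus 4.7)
My plan is to obtain both the Markov property and the transition rates of $\eta_i^{\mu_0}$ from two observations: the support of $\mu_0$ pins down the neighborhood of site $i$ whenever $\eta_i=0$, and the dynamics preserve this pinning throughout each $0$-sojourn. First, since $\mu_0(\mathcal{X})=1$ and $\mathcal{X}$ contains no configuration with two adjacent zeros, any occurrence of $\eta_i^{\mu_0}(t)=0$ forces $\eta_{i-1}^{\mu_0}(t)=\eta_{i+1}^{\mu_0}(t)=1$ almost surely; by stationarity this is true at every deterministic time $t$, which handles the initial condition.

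Next I would establish the dynamical locking. Whenever $\eta_i$ flips from $1$ to $0$, the flip is produced by an application of $\pi_i$, which by definition simultaneously sets $\eta_{i-1}=\eta_{i+1}=1$. Moreover, as long as $\eta_i$ remains at $0$ on some interval, neither $i-1$ nor $i+1$ can spike on that interval, because a spike at $i\pm 1$ would instantaneously move $\eta_i$ to $1$; therefore $\eta_{i-1}$ and $\eta_{i+1}$ stay equal to $1$ throughout each $0$-sojourn.

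With these two ingredients in place, the sojourn distributions follow. When $\eta_i=1$ the only way out is an atom of $N^*_i$, which occurs at rate $1$ independently of everything else, so the sojourn in $1$ is exponential of rate $1$. When $\eta_i=0$, locking implies that $N^*_{i-1}$ and $N^*_{i+1}$ each have intensity $1$ throughout the sojourn and the first of their atoms ends it, so the sojourn in $0$ is exponential with rate equal to the sum of the neighbors' spiking rates, independently of the observable past of $\eta_i$. Applying the strong Markov property at each transition of $\eta_i$ then yields that the successive sojourns are independent and that $(\eta_i^{\mu_0}(t))_{t\geq 0}$ is a two-state Markov chain with the claimed transition rates; as a consistency check, the stationary distribution of this chain matches $\mu_0(\xi_i=1)=2/3$. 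The main delicate step is the conditioning argument for the $0$-sojourn, where one must check that the past of $\eta_i$ alone leaves no residual uncertainty about the neighborhood: this works precisely because the state of $(\eta_{i-1},\eta_{i+1})$ immediately after each $1\to 0$ jump is deterministic (set by $\pi_i$) and is frozen during the sojourn by the locking argument.
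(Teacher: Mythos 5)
Your strategy is the paper's own: use $\mu_0(\mathcal{X})=1$ to pin both neighbors at $1$ whenever site $i$ is at $0$, and read off the sojourn rates. Your additional ``dynamical locking'' step --- that the neighbors cannot leave state $1$ during a $0$-sojourn of site $i$ because (with $\gamma=0$) the only way a neighbor leaves state $1$ is by spiking, which would instantly end the sojourn --- is a genuine improvement: the paper's one-line proof only identifies the instantaneous rates at a fixed time, whereas your locking argument plus the strong Markov property at the jump times of $\eta_i$ is what actually delivers the Markov property of the one-coordinate trajectory. That part of your write-up is sound.

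The problem is the last step, where you assert that your computation ``yields the claimed transition rates.'' It does not. Your own argument gives: during a $0$-sojourn both neighbors are active and each spikes at rate $1$, the first such spike ends the sojourn, hence the $0\to 1$ rate is $1+1=2$, not $\tfrac23$ as stated in the corollary. Your consistency check betrays this: a two-state chain with up-rate $a$ and down-rate $1$ has stationary mass $a/(a+1)$ on state $1$, which equals the density $\tfrac23$ established in the preceding proposition only for $a=2$; with $a=\tfrac23$ one would get $\tfrac25$. So your derivation is correct and in fact shows that the rate $\tfrac23$ appearing in the statement (and in the paper's proof, which writes ``hence the rate of going from $0$ to $1$ is given by $\tfrac23$'' immediately after noting that both neighbors are at $1$) cannot be right; it appears to be a confusion with the stationary density. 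You should state the conclusion your argument actually proves --- up-rate $2$, down-rate $1$ --- rather than claiming agreement with the quoted rate.
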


\begin{proof}
If $ \eta_i^{\mu_0} ( t) = 1,$ then it spikes at rate $1, $ and thus switches from $1$ to $0, $ independently of the configuration of its neighbors. If $ \eta_i^{\mu_0} ( t) = 0 , $ then we know that the two neighbors $ i \pm 1 $ of site $i$ have value $1, $ since $ \mu_0 ( {\mathcal X}) = 1.$ Hence the rate of going from $ 0$ to $1$ is given by $ \frac23.$ 
\end{proof}

\section*{Acknowledgements}
Many thanks to Marzio Cassandro and Christophe Pouzat for illuminating discussions, constructive criticism and useful information on neurobiology.  This research has been conducted as part of
the project Labex MME-DII (ANR11-LBX-0023-01) and it is part of USP
project {\em Mathematics, computation, language and the brain} and of
FAPESP project {\em Research, Innovation and Dissemination Center for
  Neuromathematics} (grant 2013/07699-0). AG is partially supported by
CNPq fellowship (grant 311 719/2016-3.)

\bibliography{biblio}{}
\bibliographystyle{imsart-nameyear}

\end{document}